\swapnumbers \theoremstyle{plain}
\newtheorem{thm}{Theorem}[section]
\newtheorem{lemma}[thm]{Lemma}
\newtheorem{prop}[thm]{Proposition}
\theoremstyle{definition}
\newtheorem{EX}[thm]{Example}
\newtheorem{RM}[thm]{Remark}
\newtheorem{defn}[thm]{Definition}
\DeclareMathOperator{\CH}{\rm CH}
\newcommand{\bbQ}{{{\mathbb Q}}}  % the rational numbers
\newcommand{\Z}{{{\mathbb{Z}}}}   % the integers
\newcommand{\calK}{\mathcal K}
\newcommand{\om}{\omega}
\title{Erratum: On the torsion of Chow groups of twisted Spin-flags}
\author{Sanghoon Baek}
\author{Kirill Zainoulline}
\author{Changlong Zhong}
\begin{document}

\maketitle

%%%%%%%%%%%%%%%%%%%%%%%%%%%%%%%%
%%%%%%%%%%% Document Starts Here %%%%%%%%%%
%%%%%%%%%%%%%%%%%%%%%%%%%%%%%%%%

\section{List of mistakes and missprints}

\begin{enumerate}
\item
In \cite[Example 4.4]{BZZ}, for type $B_n$, $\{q_{2i}\}_{i=1}^n$ is a
set of basic polynomial invariants over $\bbQ$ only (not over $\mathbb{Z}[\tfrac{1}{2}]$). Similarly,
for type $D_n$, $\{q_{2i}\}_{i=1}^{n-1}\cup \{p_n\}$ is a set of basic
polynomial invariants over $\bbQ$ only.

\item Let $K$ denote the ideal generated by $\{q_{2i}\}_{i=1}^n$ (or
  $\{q_{2i}\}_{i=1}^{n-1}\cup \{p_n\}$) over $\mathbb{Z}$, then $K\subsetneq
  I_a^W$.  In \cite[Proposition 4.5]{BZZ} $I_a^W$ has to be replaced by $K$. 

\item Let $\tau_d'$ denote the smallest positive integer $N$ such that $N\cdot K^{(d)}\subset \phi^{(d)}(I_m^W)$.
In \cite[Proposition 5.8]{BZZ} $\tau_d$ has to be replaced by
$\tau_d'$. 

\item There is a gap in the proof of \cite[Lemma 4.6]{BZZ},  hence, so is in Proposition 4.7 in loc.it..
\end{enumerate}

\section{Corrections}
To correct our arguments instead of $\{q_{2i}\}$ we use another set of
generators $\{t_i\}$ (see below), which at the
end simplifies our proofs and provides even a better upper bound for the torsion of the Chow groups,
hence, improving our results.

\medskip

Let $t_{i}=s_i(e_1^2,\ldots,e_n^2)$ denote the $i$-th elementary symmetric
polynomial in $e_j^2$.  Then \cite[Example
4.4]{BZZ} has to be replaced by
\begin{EX}\label{ex:Fainv} \cite[Lemma 8.3]{MZZ} For the type $B_n$ ($n\ge 3$) we have 
$$S^*(\Lambda)^W\otimes \Z[\tfrac{1}{2}]=\Z[\tfrac{1}{2}][t_1,...,t_n],$$
and for the type $D_n$ ($n\ge 4$) we have 
$$S^*(\Lambda)^W\otimes \Z[\tfrac{1}{2}]=\Z[\tfrac{1}{2}][t_1,...,t_{n-1},p_n], \text{
  where }p_n=e_1\cdots e_n.$$

\end{EX}

Let $\calK$ be the ideal
generated by $t_1,\ldots,t_n$ over $\mathbb{Z}$
(resp. $t_1,\ldots,t_{n-1}, p_n$) if $\mathfrak{D}$ is of type $B_n$
(resp. $D_n$). 
Then \cite[Lemma 3.1, Corollaries 3.3 and 3.4]{BZZ} has to be replaced by

\begin{lemma}\label{QNcase} Suppose we have a root system of type $B_n$ (resp. $D_n$) and $d\ge 2$ (resp. $n>d\ge 2$). Assume that $d_0=\min\{2n,d\}$.
Consider a homogeneous polynomial $P=\sum_{i=1}^{[d_0/2]} f_{d-2i}t_{i}$
of degree $d$ with integer coefficients.

If $M\mid P$, then there exist $\hat f_{d-2i}$, $i=1,\ldots ,[d_0/2]$ such that
\[
\sum_{i=1}^{[d_0/2]} \hat{f}_{d-2i}t_{i}=P
\;\text{ and }\;M \mid \hat{f}_{d-2i}.
\]
\end{lemma}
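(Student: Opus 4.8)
The plan is to reinterpret the statement as a torsion-freeness property of a graded quotient, and then to prove that quotient is in fact $\Z$-free by reading off the module structure of $R:=\Z[e_1,\ldots,e_n]$ (the ring in which the $f_{d-2i}$ live, so that ``$M\mid P$'' means $P\in MR$) over its subring of squares. First I would record that the set of homogeneous degree-$d$ polynomials of the shape $\sum_{i=1}^{[d_0/2]}f_{d-2i}t_i$ is \emph{exactly} the degree-$d$ component $\calK_d$ of the ideal $\calK$: any homogeneous degree-$d$ element of $\calK$ can be written $\sum_i g_it_i$ with $g_i$ homogeneous of degree $d-2i$, which forces $0\le d-2i$ and $i\le n$ (since $t_i=0$ for $i>n$), i.e. $1\le i\le [d_0/2]$ with $d_0=\min\{2n,d\}$, precisely the index range occurring in $P$. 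Consequently, for a fixed $M$, the Lemma is equivalent to the assertion that $(R/\calK)_d$ has no $M$-torsion: if $M\mid P$ with $P\in\calK_d$, set $w=P/M\in R_d$; then $Mw\in\calK_d$, so torsion-freeness gives $w\in\calK_d$, say $w=\sum_i g_{d-2i}t_i$, and $\hat f_{d-2i}:=Mg_{d-2i}$ does the job. I would then prove the stronger statement that $(R/\calK)_d$ is a free $\Z$-module.

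To do this I would exploit the tower $B\subset A\subset R$, where $A:=\Z[e_1^2,\ldots,e_n^2]=\Z[y_1,\ldots,y_n]$ with $y_j=e_j^2$, and $B:=\Z[t_1,\ldots,t_n]$, observing that $t_i=s_i(y_1,\ldots,y_n)$ is the $i$-th elementary symmetric function of the $y_j$. Two freeness facts drive everything: $R$ is a free $A$-module with basis the $2^n$ monomials $e_1^{\epsilon_1}\cdots e_n^{\epsilon_n}$, $\epsilon_j\in\{0,1\}$; and, integrally, $A$ is a free $B$-module (standard monomial basis, e.g. $\{y_1^{a_1}\cdots y_n^{a_n}:0\le a_i\le n-i\}$, of rank $n!$). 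For type $B_n$ one has $\calK=(t_1,\ldots,t_n)R$, so $R/\calK\cong R\otimes_A\bigl(A\otimes_B B/(t_1,\ldots,t_n)\bigr)$; since $B/(t_1,\ldots,t_n)=\Z$, the inner factor $A/(t_1,\ldots,t_n)=\bigoplus_m \Z\,m$ is $\Z$-free, and tensoring up the free extension $A\subset R$ gives $\bigoplus_\epsilon\bigl(A/(t_1,\ldots,t_n)\bigr)e^\epsilon$, still $\Z$-free. Hence every $(R/\calK)_d$ is $\Z$-free, settling type $B_n$.

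For type $D_n$ one has $\calK=(t_1,\ldots,t_{n-1},p_n)R$ with $p_n=e_1\cdots e_n$ of degree $n$. Since the hypothesis restricts to $d<n$, the generator $p_n$ (and a fortiori $t_n=p_n^2$) contributes nothing in degree $d$, so $\calK_d=(t_1,\ldots,t_{n-1})_d$ and it suffices to show $R/(t_1,\ldots,t_{n-1})$ is $\Z$-free. Here I would repeat the tower argument with $B/(t_1,\ldots,t_{n-1})=\Z[t_n]$, which is $\Z$-free; thus $A/(t_1,\ldots,t_{n-1})=\bigoplus_m\Z[t_n]\,m$ is $\Z$-free, and freeness of $R$ over $A$ again propagates this to $R/(t_1,\ldots,t_{n-1})$. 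This yields the $D_n$ case in the stated range $n>d\ge2$.

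The main obstacle, and the only step genuinely requiring care, is the \emph{integral} freeness of $A$ over $B$, equivalently the $\Z$-freeness of the $S_n$-coinvariant algebra $A/(t_1,\ldots,t_n)$: over a field this is the classical Chevalley statement, but over $\Z$ one must invoke that $t_1,\ldots,t_n$ form a regular sequence whose quotient is $\Z$-free on the monomial basis above, since this is exactly what allows the two tensor products to be computed without introducing torsion. A secondary, purely bookkeeping subtlety is matching the index range $[d_0/2]$ with precisely the generators that survive in degree $d$, together with the observation for $D_n$ that the top generator $p_n$ is invisible in the degrees $d<n$ under consideration.
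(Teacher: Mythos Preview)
Your argument is correct and uses the same two ingredients as the paper's proof: the decomposition $R=\bigoplus_{\epsilon}A\,e^{\epsilon}$ of $R=\Z[e_1,\dots,e_n]$ as a free module over $A=\Z[e_1^2,\dots,e_n^2]$ via the squarefree monomials, and the $\Z$-freeness of the coinvariant algebra $A/(t_1,\dots,t_n)$ (which the paper invokes as \cite[Lemma~3.1]{BZZ}). The only difference is packaging: you recast the statement as torsion-freeness of $(R/\calK)_d$ and argue via the tower $B\subset A\subset R$, whereas the paper stays explicit, splitting each $f_{d-2i}$ along the $e^{\delta}$-basis, observing that $M\mid P$ forces $M$ to divide each $A$-component $P_\delta=\sum_i f^{\delta}_{d-2i}t_i$, and then applying \cite[Lemma~3.1]{BZZ} to each $P_\delta$ separately before reassembling $\hat f_{d-2i}=\sum_\delta e^{\delta}\hat f^{\delta}_{d-2i}$.
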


\begin{proof}
For each $i$ we express $f_{d-2i}$ as a linear combination
\[
f_{d-2i}=\sum_\delta e^\delta f^\delta_{d-2i},
\]
where $\delta=(\delta_1,\ldots,\delta_n)$ with $\delta_i=0,1$, $ e^\delta=\prod_{i=1}^ne_i^{\delta_i}$
and $f^\delta_{d-2i}$ is a linear combination of even monomials $e_1^{2i_1}e_2^{2i_2}\ldots e_n^{2i_n}$ only. Denote $|\delta|=\sum \delta_j$, $d^\delta=\frac{d-|\delta|}{2}$ and $d_0^\delta=\min\{d^\delta, n\}\le [d_0/2]$.
Collecting the terms with $e^\delta$ we obtain
\[
P=\sum_\delta e^\delta \sum_{i=1}^{d^\delta_0} f^\delta_{d-2i}t_{i} \equiv 0 \mod M.
\]
This implies that  $M\mid \sum_{i=1}^{d^\delta_0} f^\delta_{d-2i}t_{i}$ for each $\delta$.
We apply \cite[Lemma 3.1]{BZZ} to the polynomial
$P_\delta=\sum_{i=1}^{d^\delta_0} f^\delta_{d-2i}t_{i}$
viewed as a polynomial in variables $e_{j}^2$
of degree $d^\delta$.
We obtain polynomials
$\hat  f^\delta_{d-2i}$ divisible by $M$ and
$\sum_{i=1}^{d^\delta_0} f^\delta_{d-2i}t_{i}=\sum_{i=1}^{d^\delta_0} \hat f^\delta_{d-2i}t_{i}$.

We then set
\[
\hat f_{d-2i}=\sum_\delta e^\delta \hat f^\delta_{d-2i},
\]
and the proof is finished.
\end{proof}

\cite[Prop. 4.5 and 4.7]{BZZ} have to be replaced by

\begin{prop}\label{Bcase} Let $\mathfrak{D}=\mathrm{B}_n$ (resp. $D_n$) and $d\ge 2$ (resp. $n>d\ge 2$). Then $2^{d}\cdot (\ker c_a)^{(d)}\subset \calK \subset I_a^W$.
\end{prop}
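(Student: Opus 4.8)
The plan is to prove the two inclusions separately, with essentially all of the work concentrated in the left one.

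The right inclusion $\calK\subset I_a^W$ is immediate: each generator $t_i=s_i(e_1^2,\ldots,e_n^2)$ is symmetric under signed permutations of the $e_j$, hence $W$-invariant for type $B_n$, while $p_n=e_1\cdots e_n$ is invariant under the even sign changes of type $D_n$, and all of them are homogeneous of positive degree. By Example \ref{ex:Fainv} they even generate $S^*(\Lambda)^W$ after inverting $2$. In particular they lie in the ideal $I_a^W$ generated by positive-degree invariants, so the ideal $\calK$ they generate satisfies $\calK\subset I_a^W$.

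For the left inclusion I would exploit that the only denominators occurring in $\Lambda$ are halves, coming from the (half-)spin fundamental weights: one has $\Z^n=\langle e_1,\ldots,e_n\rangle\subseteq\Lambda\subseteq\tfrac12\Z^n$, whence $2\Lambda\subseteq\Z^n$. Multiplying a degree-$d$ monomial $\lambda_1\cdots\lambda_d$ with $\lambda_j\in\Lambda$ by $2^d$ and distributing, $2^d\lambda_1\cdots\lambda_d=(2\lambda_1)\cdots(2\lambda_d)$ with each $2\lambda_j\in\Z^n$, so
\[
2^d\, S^d(\Lambda)\subseteq S^d(\Z^n)=\Z[e_1,\ldots,e_n]^{(d)}.
\]
Thus for any $P\in(\ker c_a)^{(d)}$ the element $2^dP$ is an honest integral polynomial in the $e_j$. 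This is precisely where the factor $2^d$ is produced, and it cannot be dropped, since $P$ itself involves the half-integral weights.

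It remains to show that this integral polynomial lies in $\calK$. Rationally the kernel of the characteristic map equals the invariant ideal, so $P\in(\ker c_a)^{(d)}$ can be written $P=\sum_i g_i t_i$ with $g_i\in\bbQ[e_1,\ldots,e_n]$ (plus a $p_n$-term in type $D_n$, which does not occur once $d<n$). Hence $2^dP=\sum_i(2^dg_i)\,t_i$ is a rational combination of the $t_i$ that happens to be integral. Choosing a common denominator $M$ of the $g_i$ gives $M\cdot(2^dP)=\sum_i\bigl(2^d M g_i\bigr)t_i\in\calK_0:=(t_1,\ldots,t_n)\subseteq\Z[e_1,\ldots,e_n]$ with integral coefficients, and $M\mid M\cdot(2^dP)$. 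Lemma \ref{QNcase} now applies verbatim — it says exactly that divisibility of a combination $\sum f_{d-2i}t_i$ by $M$ can be transferred to the coefficients, i.e. that $\Z[e_1,\ldots,e_n]/\calK_0$ is torsion-free — and yields $2^dP\in\calK_0\subseteq\calK$, as desired. The main obstacle is this passage from the rational identity to an integral membership in $\calK$: Lemma \ref{QNcase} is what guarantees that beyond the $2^d$ forced by the lattice index $[\Lambda:\Z^n]$ no further primes are needed. Type $D_n$ with $n>d\ge 2$ is handled identically, since in degrees below $n$ the generator $p_n$ is irrelevant and the pertinent invariants are generated by the $t_i$.
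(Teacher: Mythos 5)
Your proof is correct, and it uses the same three ingredients as the paper's own argument: the lattice inclusion $2\Lambda\subseteq\Z^n$ giving $2^d S^d(\Lambda)\subseteq\Z[e_1,\ldots,e_n]$, the identification of $\ker c_a$ with the ideal generated by the $t_i$ once $2$ is inverted (Example~\ref{ex:Fainv}), and Lemma~\ref{QNcase} as the device that transfers divisibility of a combination $\sum f_{d-2i}t_i$ to its coefficients. What differs is how the lemma is deployed, and your way is genuinely more direct. The paper argues by contradiction: it takes the smallest $r\ge 0$ with $2^r f=\sum f_{d-2i}t_i$, $f_{d-2i}\in\Z[\om_1,\ldots,\om_n]$, assumes $r\ge d+1$, rewrites everything in the $e_j$-variables (which forces bookkeeping of the conversion factors $2^{2i}$ on the coefficients), applies the lemma with the modulus $2^{d+1}$, and divides back to contradict the minimality of $r$ --- a descent on the power of $2$. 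You instead clear denominators once: from the rational identity $2^d f=\sum (2^d g_i)t_i$ you multiply by a common denominator $M$, apply the lemma with this arbitrary modulus $M$ (the lemma indeed allows any $M$, not just powers of $2$), and divide by $M$ to land directly in $(t_1,\ldots,t_n)\Z[e_1,\ldots,e_n]\subseteq\calK$. This one-shot version removes the minimal-counterexample scaffolding and makes transparent that the prime $2$ enters only through the index $[\Lambda:\Z^n]$, while the lemma does prime-independent work; it also yields coefficients in $\Z[e_1,\ldots,e_n]$ rather than merely in $\Z[\om_1,\ldots,\om_n]$. Your handling of the degree constraints (only $t_i$ with $2i\le d$, $i\le n$ can occur, and for $D_n$ with $d<n$ the generator $p_n$ cannot appear) matches the paper's, and your observation that the right inclusion $\calK\subset I_a^W$ is immediate from $W$-invariance of the generators is exactly why the paper leaves it unproved. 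One small caveat: your gloss that the lemma says ``$\Z[e_1,\ldots,e_n]/(t_1,\ldots,t_n)$ is torsion-free'' overstates it slightly, since the lemma is only available in the stated range of degrees ($d\ge 2$, resp. $2\le d<n$), but your application stays within that range, so nothing is affected.
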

 
\begin{proof}
Let $d_0=\min\{d,2n\}$. By Example \ref{ex:Fainv}, $(\ker c_a) \otimes \Z[\tfrac{1}{2}]=I_a^W \otimes \Z[\tfrac{1}{2}]$ is generated by $t_{i}$, $i=1,...,n$,
given a polynomial $f\in (\ker c_a)^{(d)}$, we can write it as
\[
2^r f=\sum_{i=1}^{[d_0/2]} f_{d-2i}t_{i} \in I_a^W,\; \text{ for some }f_{d-2i}\in \Z[\om_1,...,\om_n]\text{ and }r \ge 0.
\]
Suppose $r$ is the smallest such integer.
To finish the proof it suffices to show that $r\le d$.

Assume the contrary, i.e. that $r\ge d+1$.
Expressing $\om_j$'s in terms of $e_j$'s, we obtain
$f=\tfrac{1}{2^{d}}\tilde{f}$ and $f_{d-2i}=\tfrac{1}{2^{d-2i}}\tilde{f}_{d-2i}$ for some $\tilde{f}, \tilde{f}_{d-2i}\in \Z[e_1,...,e_n]$.
So that
\[
 2^r \cdot \tilde f= \sum_{i=1}^{[d_0/2]} (2^{2i}\tilde{f}_{d-2i})\cdot t_{i}.
\]
By Corollary~\ref{QNcase}, there exists $\tilde{h}_{d-2i}\in \Z[e_1,...,e_n]$ divisible by $2^{d+1}$ such that
$2^r\cdot \tilde{f}=\sum_{i=1}^{[d_0/2]} \tilde{h}_{d-2i}t_{i}$.
Expressing $e_j$'s in terms of $\om_j$'s back, we obtain
$2^{d}2^r\cdot f=\sum_{i=1}^{[d_0/2]}  \tilde h_{d-2i}t_{i}$,
which implies
\[
2^{r-1}f=\sum_{i=1}^{[d_0/2]} \tfrac{1}{2^{d+1}}\tilde {h}_{d-2i}\cdot t_{i}.
\]
Since $\tilde{h}_{d-2i}$ are divisible by $2^{d+1}$, we have $\tfrac{1}{2^{d+1}}\tilde h_{d-2i} \in \Z[\om_1,...,\om_n]$.
This contradicts to the minimality assumption on $r$.
\end{proof}

\begin{RM}There is a typo in the proof of \cite[Lemma 6.4]{BNZ}, i.e., the congruence relations in the proof should be 
$$a_i\equiv a_j\equiv -d, ~a_{ij}-a_i\equiv a_{ij}-a_j\equiv 2d, ~a_i-a_{ij}+a_j\equiv -3d.$$
One can use Proposition~\ref{Bcase} above to get an upper bound of the index in degree 4. 
\end{RM}

If we modify the definition of the $d$-th exponent $\tau_d$ in
\cite[Definition 2.1]{BNZ} as follows

\begin{defn}
For type $B_n$ and type $D_n$, the $d$-th \textit{exponent} of a root system (denoted by $\tau_{d}$) is the g.c.d. of all nonnegative integers $N_{d}$ satisfying
\[N_d\cdot \calK^{(d)} \subseteq \phi^{(d)}(I_{m}^W),
\]
where $\calK^{(d)}=(\calK\cap I_a^d)/(\calK\cap I_a^{d+1})$.
\end{defn}
we see that the proof of \cite[Proposition 5.8]{BZZ} indeed implies that $\tau_d|2$ for all $d\le 2d-1$ (resp. $d\le 2d-3$) if $\mathfrak{D}$ is of type $B_n$ (resp. $D_n$). 
Then
following the proof of \cite[Theorem 6.2]{BZZ} we obtain that 
\begin{thm} The integer
$(i-1)!\cdot 2^{i+1}$ annihilates the torsion of $\gamma^{(i)}(G/B)$, and
$(d-1)!\prod_{i=2}^d(i-1)!\cdot 2^{i+1}$ annihilates the torsion of $\CH^d(_\xi(G/B))$.
\end{thm}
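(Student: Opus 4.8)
The plan is to run the argument of \cite[Theorem 6.2]{BZZ} essentially unchanged, but to feed it the three corrected inputs assembled above: the basic invariants $t_i$ of Example~\ref{ex:Fainv}, the chain of inclusions $2^{d}\cdot(\ker c_a)^{(d)}\subset\calK\subset I_a^W$ of Proposition~\ref{Bcase}, and the divisibility $\tau_d\mid 2$ valid in the range $d\le 2n-1$ (type $B_n$), resp. $d\le 2n-3$ (type $D_n$), furnished by the corrected proof of \cite[Proposition 5.8]{BZZ} together with the modified exponent. Each of these replaces a weaker or flawed ingredient of \cite{BZZ}, and it is precisely their combination that both closes the gap in \cite[Lemma 4.6]{BZZ} and sharpens the numerical bound.

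First I would bound the torsion of $\gamma^{(i)}(G/B)$. Via the characteristic map this torsion is identified, as in \cite{BNZ}, with the defect of $\phi^{(i)}(I_m^W)$ inside $\calK^{(i)}$, transported to the $\gamma$-graded piece of $K_0(G/B)$. By the very definition of the modified exponent this defect is annihilated by $\tau_i$, and $\tau_i\mid 2$ contributes a single factor $2$. Passing from $\calK^{(i)}$ to $(\ker c_a)^{(i)}$ costs the factor $2^{i}$ of Proposition~\ref{Bcase}, so the $2$-primary contribution is $2^{i+1}$. The remaining factor $(i-1)!$ is the integral Chern-class comparison: the comparison between the $\gamma$-graded and $\CH^i$ agrees with multiplication by $\pm(i-1)!$ on the $i$-th graded piece, so clearing its denominator introduces $(i-1)!$. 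Multiplying, $(i-1)!\cdot 2^{i+1}$ annihilates the torsion of $\gamma^{(i)}(G/B)$.

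For $\CH^d({}_\xi(G/B))$ I would pass from the associated graded to the group itself. The $\gamma$-filtration on $K_0({}_\xi(G/B))$ has subquotients whose torsion, in each degree $i\le d$, is bounded by the annihilator just obtained for $\gamma^{(i)}$; restriction to the split form is injective on torsion up to these exponents, so twisting by $\xi$ introduces nothing new. Resolving the successive extensions of this finite filtration multiplies the per-degree annihilators and yields $\prod_{i=2}^d(i-1)!\,2^{i+1}$, while the final top comparison $\mathrm{gr}^d_\gamma K_0({}_\xi(G/B))\to\CH^d({}_\xi(G/B))$ contributes the extra factor $(d-1)!$. This produces the stated annihilator $(d-1)!\prod_{i=2}^d(i-1)!\cdot 2^{i+1}$.

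The main obstacle is this last passage from graded pieces to $\CH^d$ itself: the associated graded determines the Chow group only up to the extension problem, and one must verify that no torsion beyond the product of the per-degree bounds is created. Here the uniformity of the bound $\tau_i\mid 2$ across the whole range $i\le d$, as opposed to a degree-dependent exponent, is exactly what keeps the accumulated $2$-power equal to $\prod_{i=2}^d 2^{i+1}$ rather than letting it grow, and the clean two-term inclusion of Proposition~\ref{Bcase} is what makes the factorial bookkeeping collapse to the asserted product.
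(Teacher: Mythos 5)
Your proposal takes essentially the same route as the paper: the erratum proves this theorem simply by rerunning the proof of \cite[Theorem 6.2]{BZZ} with the corrected inputs (Example~\ref{ex:Fainv}, the inclusions $2^{d}\cdot(\ker c_a)^{(d)}\subset \calK\subset I_a^W$ of Proposition~\ref{Bcase}, and the bound $\tau_d\mid 2$ for the modified exponent), and your bookkeeping --- the factor $2$ from $\tau_i$, the factor $2^{i}$ from Proposition~\ref{Bcase}, the factor $(i-1)!$ from the Chern class comparison, and the product over $i=2,\dots,d$ with the extra $(d-1)!$ for the twisted Chow group --- reproduces exactly the stated constants. Your reconstruction is in fact more detailed than the erratum's own one-line proof by reference.
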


\end{document}